\newtheorem{theorem}{Theorem}[section]
\newtheorem{lemma}[theorem]{Lemma}
\newtheorem{corollary}[theorem]{Corollary}
\theoremstyle{definition}
\theoremstyle{remark}
\newtheorem{remark}[theorem]{Remark}
\numberwithin{equation}{section}
\begin{document}

\title[Equality in a generalized Dunkl--Williams inequality]{Characterization of equality in a generalized Dunkl--Williams inequality}

\author[M.S. Moslehian, F. Dadipour]{Mohammad Sal Moslehian and Farzad Dadipour}

\address{Department of Pure Mathematics, Center of Excellence in
Analysis on Algebraic Structures (CEAAS), Ferdowsi University of
Mashhad, P. O. Box 1159, Mashhad 91775, Iran.}
\email{dadipoor@yahoo.com} \email{moslehian@ferdowsi.um.ac.ir and
moslehian@ams.org}
\urladdr{\url{http://profsite.um.ac.ir/~moslehian/}}

\subjclass[2010]{Primary 46L08; Secondary 47A63, 46C05.}

\keywords{Dunkl--Williams inequality; triangle inequality; pre-Hilbert $C^*$-module.}

\begin{abstract}
We establish a generalization of the Dunkl--Williams inequality and its inverse in the framework of
Hilbert $C^*$-modules and characterize the equality case. As applications, we get some new results and some known results
due to Pe\v{c}ari\'c and Raji\'c [Linear Algebra Appl. 425 (2007), no. 1, 16--25].
\end{abstract} \maketitle


\section{Introduction and Preliminaries}

There are many interesting refinements and reverses of the
triangle inequality in normed linear spaces. In this direction some
authors improved the well-known Dunkl--Williams inequality
\cite{D-W} that states:
\begin{eqnarray}\label{D-W}
\left\|\frac{x}{\|x\|}-\frac{y}{\|y\|}\right\|\leq\frac{4\|x-y\|}{\|x\|+\|y\|}
\end{eqnarray}
for any two non-zero elements $x,y$ in a normed linear space.
Maligranda \cite{Malig} presented the following refinement of
\eqref{D-W} (see also \cite{MAL})
\begin{eqnarray}\label{Malig}
\left\|\frac{x}{\|x\|}-\frac{y}{\|y\|}\right\|
\leq\frac{\|x-y\|+|\,\|x\|-\|y\|\,|}{\max \{ \|x\|,\|y\| \} }\,.
\end{eqnarray}
A reverse of inequality \eqref{Malig} was given by Mercer
\cite{Merc} as follows
\begin{eqnarray}\label{Merc}
\left\|\frac{x}{\|x\|}-\frac{y}{\|y\|}\right\|
\geq\frac{\|x-y\|-|\,\|x\|-\|y\|\,|}{\min \{\|x\|,\|y\|\} }\,.
\end{eqnarray}
In \cite{Kato} Kato et al. improved  the triangle inequality and
provided a reverse by showing that
\begin{eqnarray}\label{Kato1}
\left\|\sum_{j=1}^{n}
x_j\right\|+\left(n-\left\|\sum_{j=1}^{n}\frac{x_j}{\|x_j\|}\right\|\
\right)\min_{1\leq i\leq
n}\|x_i\|\leq\sum_{j=1}^{n}\|x_j\|\\
\left\|\sum_{j=1}^{n}x_j\right\|+\left(n-\left\|\sum_{j=1}^{n}\frac{x_j}{\|x_j\|}\right\|\
\right)\max_{1\leq i\leq n}\|x_i\|\geq\sum_{j=1}^{n}\|x_j\|
\label{Kato2}
\end{eqnarray}
for all non-zero elements $x_1,\cdots,x_n$ of a normed linear space.

In \cite{D-F-M} the authors presented several operator versions of the Dunkl--Williams inequality
with respect to the $p$-angular distance for operators. Pe\v{c}ari\'c and Raji\'c \cite{Pec-Raj1} sharped inequalities
\eqref{Kato1} and \eqref{Kato2} (when $n>2$) and they generalized
inequalities \eqref{Malig} and \eqref{Merc} by showing that
\begin{eqnarray}\label{Pec-Raj1}
\left\|\sum_{j=1}^{n}\frac{x_j}{\|x_j\|}\right\|\leq\min_{1\leq
i\leq n}\left\{\frac{1}{\|x_i\|}\left(\
\left\|\sum_{j=1}^{n}x_j\right\|+\sum_{j=1}^{n}|\,\|x_j\|-\|x_i\|\,|\right)\right\}\\
\left\|\sum_{j=1}^{n}\frac{x_j}{\|x_j\|}\right\|\geq\max_{1\leq
i\leq n}\left\{\frac{1}{\|x_i\|}\left(\
\left\|\sum_{j=1}^{n}x_j\right\|-\sum_{j=1}^{n}|\,\|x_j\|-\|x_i\|\,|\right)\right\}\label{Pec-Raj2}
\end{eqnarray}
for all non-zero elements $x_1,\cdots,x_n$ of a normed linear space.

In \cite{Drag} Dragomir obtained a generalization of inequalities
\eqref{Pec-Raj1} and \eqref{Pec-Raj2} by replacing arbitrary scalars
instead of $\frac{1}{\|x_i\|}$\,\,$(i=1,\cdots,n)$. The equality case in
the triangle inequality and its refinements for Banach space
operators was studied by many authors. Barra and Boumzgour
\cite{Bar-Boum} presented a characterization of triangle equality
for Hilbert space operators. Pe\v{c}ari\'c and Raji\'c
\cite{Pec-Raj2} presented some equivalent conditions for the case of
equality in inequalities \eqref{Pec-Raj1} and \eqref{Pec-Raj2} for
elements of pre-Hilbert $C^*$-modules.

In this paper we establish a generalization of the Dunkl--Williams inequality and its inverse in the framework of
Hilbert $C^*$-modules and characterize the equality case. As applications, we get some new and known results
in the case of equality.

The notion of pre-Hilbert $C^*$-module is a generalization of that
of Hilbert space in which the field of scalars $\mathbb{C}$ is
replaced by a $C^*$-algebra. The formal definition is as follows.

A complex linear space $\mathscr{X}$ that is a right module over a
$C^*$-algebra $\mathscr{A}$
 is called a pre-Hilbert $\mathscr{A}$-module if there is an $\mathscr{A}$-valued inner product on $\mathscr{X}$,
 i.e. a map $\langle . , . \rangle :\mathscr{X}\times \mathscr{X}\longrightarrow\mathscr{A}$ with the following properties:\\
 (i) $\langle x,x \rangle\geq0$, for $x\in \mathscr{X}$\\
(ii) $\langle x,x \rangle=0$ if and only if $x=0$\\
(iii) $\langle x,\alpha y+\beta z \rangle=\alpha\langle x,y \rangle+\beta\langle x,z \rangle$, for $x,y,z\in \mathscr{X}$, $\alpha,\beta\in \mathbb{C}$\\
(iv) $\langle x,ya \rangle=\langle x,y \rangle a$, for $x,y\in \mathscr{X}$, $a\in \mathscr{A}$\\
(v) $\langle x,y \rangle=\langle y,x \rangle^*$, for $x,y\in \mathscr{X}$\\
One defines a norm on $\mathscr{X}$ by $\|x\|=\|\langle x,x
\rangle\|^\frac{1}{2}$, $x\in \mathscr{X}$. A pre-Hilbert $\mathscr{A}$ which
is complete with respect to its norm is called a Hilbert
$C^*$-module. Clearly every inner product space is a pre-Hilbert $\mathbb{C}$-module. Also every $C^*$-algebra $\mathscr{A}$ is a Hilbert
$\mathscr{A}$-module under the inner product given by $\langle a,b
\rangle=a^*b$. The Banach space $B(H_1,H_2)$ of all bounded linear
operator between Hilbert spaces $H_1$ and $H_2$ is a Hilbert
$B(H_1)$-module under the inner product $\langle T,S \rangle=T^*S$.

Throughout this paper $\mathscr{A}$ denotes a unital $C^*$-algebra with the unit $e$. We refer the reader to \cite{Mur} for undefined notions on $C^*$-algebra theory and to \cite{Lan} for more information on
Hilbert $C^*$-modules.


\section{Main results}

We start this section with the following useful Lemma due to
Aramba\v{s}i\'c and Raji\'c \cite{Aram-Raj}, which characterizes the
generalized triangle equality for finitely many elements of a
pre-Hilbert $C^*$-module.

\begin{lemma}\label{Aram-Rajic}
Let $\mathscr{X}$ be a pre-Hilbert $\mathscr{A}$-module and
$x_1,\cdots,x_n$ non-zero elements of $\mathscr{X}$. Then the
equality $\|x_1+\cdots+x_n\|=\|x_1\|+\cdots+\|x_n\|$ holds if and
only if there is a state $\varphi$ on $\mathscr{A}$ such that
$\varphi \langle x_i,x_n \rangle=\|x_i\| \|x_n\|$ for all $1\leq
i\leq n-1$.
\end{lemma}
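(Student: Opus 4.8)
The plan is to base everything on the positive semidefinite sesquilinear form $B_\varphi(x,y)=\varphi\langle x,y\rangle$ attached to a state $\varphi$ on $\mathscr{A}$, together with two elementary facts: the Cauchy--Schwarz inequality $|B_\varphi(x,y)|^2\leq B_\varphi(x,x)\,B_\varphi(y,y)$, valid because $\varphi\langle x,x\rangle\geq0$, and the estimate $\varphi\langle x,x\rangle\leq\|\langle x,x\rangle\|=\|x\|^2$, valid because $\varphi$ is positive of norm one and $\langle x,x\rangle\geq0$. Throughout I write $s=x_1+\cdots+x_n$.

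For the sufficiency direction, suppose a state $\varphi$ satisfies $\varphi\langle x_i,x_n\rangle=\|x_i\|\,\|x_n\|$ for $1\leq i\leq n-1$. First I would feed each identity into Cauchy--Schwarz: since $\|x_i\|^2\|x_n\|^2=|B_\varphi(x_i,x_n)|^2\leq B_\varphi(x_i,x_i)B_\varphi(x_n,x_n)\leq\|x_i\|^2\|x_n\|^2$, every inequality is forced to be an equality, whence $\varphi\langle x_i,x_i\rangle=\|x_i\|^2$ for all $i$ (including $i=n$). A short computation then shows $B_\varphi\!\left(\tfrac{x_i}{\|x_i\|}-\tfrac{x_n}{\|x_n\|},\,\tfrac{x_i}{\|x_i\|}-\tfrac{x_n}{\|x_n\|}\right)=1-1-1+1=0$, so each $\tfrac{x_i}{\|x_i\|}-\tfrac{x_n}{\|x_n\|}$ lies in the null space $N=\{x:B_\varphi(x,x)=0\}$; by Cauchy--Schwarz again $B_\varphi$ vanishes whenever one argument lies in $N$. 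Writing $\tfrac{x_i}{\|x_i\|}=\tfrac{x_n}{\|x_n\|}+u_i$ with $u_i\in N$ gives $B_\varphi(x_i,x_j)=\|x_i\|\,\|x_j\|$ for all $i,j$, and therefore $\varphi\langle s,s\rangle=\sum_{i,j}B_\varphi(x_i,x_j)=\big(\sum_i\|x_i\|\big)^2$. Since $\|s\|^2=\|\langle s,s\rangle\|\geq\varphi\langle s,s\rangle$, this yields $\|s\|\geq\sum_i\|x_i\|$, and the reverse inequality is the ordinary triangle inequality, so equality holds.

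For the necessity direction, assume $\|s\|=\sum_i\|x_i\|$. Because $\langle s,s\rangle$ is a positive element with $\|\langle s,s\rangle\|=\|s\|^2=\big(\sum_i\|x_i\|\big)^2$, and the state space of $\mathscr{A}$ is weak-$*$ compact, there is a state $\varphi_0$ attaining this norm, that is, $\varphi_0\langle s,s\rangle=\big(\sum_i\|x_i\|\big)^2$. I would then expand $\varphi_0\langle s,s\rangle=\sum_{i,j}B_{\varphi_0}(x_i,x_j)$, note that this sum is real and that each summand satisfies $\operatorname{Re}B_{\varphi_0}(x_i,x_j)\leq|B_{\varphi_0}(x_i,x_j)|\leq\|x_i\|\,\|x_j\|$, and compare with $\big(\sum_i\|x_i\|\big)^2=\sum_{i,j}\|x_i\|\,\|x_j\|$. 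Equality of the two sums forces equality in every term, so $B_{\varphi_0}(x_i,x_j)=\|x_i\|\,\|x_j\|$ for all $i,j$; in particular $\varphi_0\langle x_i,x_n\rangle=\|x_i\|\,\|x_n\|$ for $1\leq i\leq n-1$, and $\varphi=\varphi_0$ is the desired state.

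The routine parts are the Cauchy--Schwarz manipulations; the two points deserving care are, in the sufficiency direction, the null-space argument that upgrades knowledge of the single column $\{B_\varphi(x_i,x_n)\}$ to all inner products $B_\varphi(x_i,x_j)$, and, in the necessity direction, the use of weak-$*$ compactness (Banach--Alaoglu) to guarantee that the norm of the positive element $\langle s,s\rangle$ is genuinely attained by a state. I expect the latter to be the main conceptual obstacle, since the whole argument hinges on producing one distinguished state rather than merely a supremizing sequence.
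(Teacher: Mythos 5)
Your proof is correct, but note a peculiarity of the assignment: the paper does not prove this statement at all. Lemma~\ref{Aram-Rajic} is quoted as a known result of Aramba\v{s}i\'c and Raji\'c \cite{Aram-Raj} and used as a black box, so there is no in-paper argument to compare against; what you have done is reconstruct, essentially, the argument of the cited source. Judged on its own merits, your proof is sound and rests on the right two ingredients: the positive semidefinite Hermitian form $B_\varphi(x,y)=\varphi\langle x,y\rangle$ with its Cauchy--Schwarz inequality, and the existence of a state attaining the norm of the positive element $\langle s,s\rangle$. On the latter point, weak-$*$ compactness of the state space by itself is not quite the whole story: you also need that $\sup_\varphi \varphi(a)=\|a\|$ for $a\geq 0$ (equivalently, argue via $\|a\|\in\sigma(a)$, take the evaluation character on $C^*(a,e)\cong C(\sigma(a))$ at the point $\|a\|$, and extend it to a state on $\mathscr{A}$); this is standard (see, e.g., Murphy \cite{Mur}, Theorem 3.3.6), but it deserves the explicit citation rather than an appeal to Banach--Alaoglu alone. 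Your necessity direction is exactly the standard argument. In the sufficiency direction, your null-space detour is correct but longer than needed: once the Cauchy--Schwarz equality case gives $\varphi\langle x_i,x_i\rangle=\|x_i\|^2$ for every $i$ (including $i=n$), you can compute directly $\varphi\langle s,x_n\rangle=\sum_{i=1}^{n}\varphi\langle x_i,x_n\rangle=\big(\sum_{i=1}^{n}\|x_i\|\big)\|x_n\|$ and then apply Cauchy--Schwarz to the single pair $(s,x_n)$, obtaining $\big(\sum_{i=1}^{n}\|x_i\|\big)\|x_n\|\leq \|s\|\,\|x_n\|$, i.e. $\sum_{i=1}^{n}\|x_i\|\leq\|s\|$, which together with the triangle inequality finishes the proof; there is no need to upgrade the single column $\{B_\varphi(x_i,x_n)\}$ to all products $B_\varphi(x_i,x_j)$.
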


The next lemma is interesting on its own right. Recall that an
element $a\in \mathscr{A}$ is called coisometry if $aa^*=e$.

\begin{lemma}\label{lemma}
 Let $\mathscr{X}$ be a pre-Hilbert $\mathscr{A}$-module, $x\in \mathscr{X}$ and $a\in \mathscr{A}$ be
 a scalar multiple of a coisometry. Then\\
{\rm (i)} $\|xa\|=\|x\|\ \|a\|$;\\
{\rm (ii)}$x=0$ or $a=0$ if $xa=0$.
\end{lemma}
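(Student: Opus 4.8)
The plan is to reduce both parts to a single norm identity describing how a scalar multiple of a coisometry acts by conjugation on the positive element $\langle x,x\rangle$. Write $a=\lambda u$ with $\lambda\in\mathbb{C}$ and $u$ a coisometry, so that $uu^*=e$. Note first that $\|u\|^2=\|uu^*\|=\|e\|=1$, hence $\|a\|=|\lambda|$, and recall that by definition $\|x\|^2=\|\langle x,x\rangle\|$.

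First I would compute $\langle xa,xa\rangle$ directly from the axioms. Using (v) and then (iv), one gets $\langle xa,y\rangle=\langle y,xa\rangle^*=(\langle y,x\rangle a)^*=a^*\langle x,y\rangle$, and applying this with $y=xa$ together with (iv) again yields $\langle xa,xa\rangle=a^*\langle x,x\rangle a=|\lambda|^2\,u^*\langle x,x\rangle u$. Taking norms, part (i) is then equivalent to the claim that $\|u^*bu\|=\|b\|$ for the positive element $b=\langle x,x\rangle$, since this would give $\|xa\|^2=|\lambda|^2\|\langle x,x\rangle\|=\|a\|^2\,\|x\|^2$.

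The heart of the matter is this norm identity, and I expect it to be the only step that genuinely uses the coisometry hypothesis rather than merely that $u$ has norm one. The upper bound $\|u^*bu\|\leq\|b\|$ follows at once from submultiplicativity of the norm together with $\|u\|=\|u^*\|=1$. For the reverse bound I would exploit the defining relation $uu^*=e$ to write $b=ebe=(uu^*)b(uu^*)=u(u^*bu)u^*$; submultiplicativity then gives $\|b\|\leq\|u^*bu\|$, and the two inequalities force equality. Finally, part (ii) is immediate from (i): if $xa=0$ then $\|x\|\,\|a\|=\|xa\|=0$, so $\|x\|=0$ or $\|a\|=0$, that is $x=0$ or $a=0$.
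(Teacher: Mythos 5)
Your proof is correct, but the key step is genuinely different from the paper's. Both arguments begin the same way: write $a=\lambda u$, observe $\|a\|=|\lambda|$, and reduce the problem to evaluating $\|a^*\langle x,x\rangle a\|$. At that point the paper invokes the $C^*$-identity twice together with the square root of the positive element $\langle x,x\rangle$, rewriting
$\|a^*\langle x,x\rangle a\|^{1/2}=\|\langle x,x\rangle^{1/2}a\|=\|a^*\langle x,x\rangle^{1/2}\|=\|\langle x,x\rangle^{1/2}aa^*\langle x,x\rangle^{1/2}\|^{1/2}$,
and then uses $aa^*=|\lambda|^2e$ to conclude. You instead isolate the clean statement that conjugation $b\mapsto u^*bu$ by a coisometry is isometric, and prove it by two applications of submultiplicativity: $\|u^*bu\|\leq\|b\|$ trivially, and $\|b\|=\|u(u^*bu)u^*\|\leq\|u^*bu\|$ from the identity $b=(uu^*)b(uu^*)$. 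Your route is more elementary --- it avoids the functional calculus needed to form $\langle x,x\rangle^{1/2}$, relying only on submultiplicativity, $\|c\|=\|c^*\|$, and the algebraic relation $uu^*=e$ --- and it yields something slightly more general, since $\|u^*bu\|=\|b\|$ holds for every $b\in\mathscr{A}$, not only positive elements. The paper's chain of equalities is shorter on the page but leans on heavier machinery. Part (ii) is deduced from (i) identically in both proofs.
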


\begin{proof}
(i) Let $a=\lambda u$, for some scalar $\lambda\in\mathbb{C}$ and some
coisometry $u\in\mathscr{A}$. First we note that
$\|a\|=\|a^*\|=\|aa^*\|^{\frac{1}{2}}=\|\lambda\bar{\lambda}uu^*\|^{\frac{1}{2}}=|\lambda|$.
We have
\begin{eqnarray*}
\|xa\|&=&\|\langle xa,xa \rangle\|^{\frac{1}{2}}=\|a^*\langle x,x
\rangle a\|^{\frac{1}{2}}=\|\langle x,x \rangle^{\frac{1}{2}}a\|
=\|a^*\langle x,x \rangle^{\frac{1}{2}}\|\\
&=&\|\langle x,x \rangle^{\frac{1}{2}}aa^*\langle x,x
\rangle^{\frac{1}{2}}\|^{\frac{1}{2}}=|\lambda|\ \|\langle x,x
\rangle\|^{\frac{1}{2}}=\|x\| \|a\|\,.
\end{eqnarray*}
(ii) It follows from part (i).
\end{proof}

Now we can establish a generalization of the Dunkl--Williams
inequality and its reverse in a pre-Hilbert $C^*$-module.
Our results generalize inequalities
\eqref{Pec-Raj1}, \eqref{Pec-Raj2} and some results due to Dragomir
\cite{Drag} for elements of pre-Hilbert $C^*$-modules.

\begin{theorem}\label{T1}
Let $\mathscr{X}$ be a pre-Hilbert $\mathscr{A}$-module. If $x_j\in \mathscr{X}$ and
$a_j\in\mathscr{A}$ for $j=1,\cdots,n$ such that $a_j, a_j-a_i$ are
scalar multiples of coisometries, then
\begin{eqnarray}\label{L1}
{\rm (i)}\ \left\|\sum_{j=1}^{n} x_ja_j\right\|\leq \min_{1\leq
i\leq n}\left\{\left\|\sum_{j=1}^{n} x_j\right\|
\|a_i\|+\sum_{j=1}^{n}\|x_j\|\
\|a_j-a_i\|\right\}\,,\\
{\rm (ii)}\ \left\|\sum_{j=1}^{n} x_ja_j\right\|\geq \max_{1\leq
i\leq n}\left\{\left\|\sum_{j=1}^{n} x_j\right\|
\|a_i\|-\sum_{j=1}^{n}\|x_j\|\ \|a_j-a_i\|\right\}.\label{L2}
\end{eqnarray}
\end{theorem}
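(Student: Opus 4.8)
The plan is to prove both inequalities at once by fixing an index $i$, exploiting a single algebraic decomposition, and only at the very end taking the minimum (respectively maximum) over $i$. The crucial observation is that, since $\mathscr{X}$ is a right $\mathscr{A}$-module, for each fixed $i$ one may write
\[
\sum_{j=1}^{n} x_j a_j = \left(\sum_{j=1}^{n} x_j\right) a_i + \sum_{j=1}^{n} x_j(a_j - a_i).
\]
This identity splits the sum into a ``main term'' $\left(\sum_j x_j\right) a_i$ and ``error terms'' $x_j(a_j - a_i)$, and it is exactly here that the hypotheses are used: $a_i$ is a scalar multiple of a coisometry (take $j=i$ in the assumption on $a_j$), and each $a_j - a_i$ is a scalar multiple of a coisometry as well.

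For part (i) I would apply the ordinary triangle inequality to this decomposition, obtaining
\[
\left\|\sum_{j=1}^{n} x_j a_j\right\| \leq \left\|\left(\sum_{j=1}^{n} x_j\right) a_i\right\| + \sum_{j=1}^{n} \|x_j(a_j - a_i)\|.
\]
Lemma \ref{lemma}(i) now turns every term on the right into a product of norms: because $a_i$ is a scalar multiple of a coisometry, $\left\|\left(\sum_j x_j\right) a_i\right\| = \left\|\sum_j x_j\right\|\, \|a_i\|$, and because $a_j - a_i$ is one too, $\|x_j(a_j - a_i)\| = \|x_j\|\, \|a_j - a_i\|$. Since the resulting estimate holds for every $i$, taking the minimum over $1 \leq i \leq n$ yields \eqref{L1}.

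For part (ii) I would use the very same decomposition in reverse. Rewriting it as $\left(\sum_j x_j\right) a_i = \sum_j x_j a_j - \sum_j x_j(a_j - a_i)$ and applying the triangle inequality gives
\[
\left\|\left(\sum_{j=1}^{n} x_j\right) a_i\right\| \leq \left\|\sum_{j=1}^{n} x_j a_j\right\| + \sum_{j=1}^{n} \|x_j(a_j - a_i)\|,
\]
which, after the identical two applications of Lemma \ref{lemma}(i) and a rearrangement, becomes $\left\|\sum_j x_j a_j\right\| \geq \left\|\sum_j x_j\right\|\, \|a_i\| - \sum_j \|x_j\|\, \|a_j - a_i\|$. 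Taking the maximum over $i$ then produces \eqref{L2}.

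The argument is essentially bookkeeping once the decomposition is fixed, so there is no genuine obstacle. The only points demanding care are confirming that Lemma \ref{lemma}(i) legitimately applies to \emph{both} $a_i$ and $a_j - a_i$, which is precisely what the hypothesis that $a_j$ and $a_j - a_i$ are scalar multiples of coisometries guarantees, and using the right $\mathscr{A}$-linearity identity $\sum_j x_j a_i = \left(\sum_j x_j\right) a_i$ correctly. I would also note, for later use in the equality analysis, that equality in \eqref{L1} forces simultaneous equality in the triangle inequality step and is the place where Lemma \ref{Aram-Rajic} will eventually come into play.
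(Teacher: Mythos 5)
Your proof is correct and follows essentially the same route as the paper: the same decomposition $\sum_j x_j a_j = \bigl(\sum_j x_j\bigr)a_i + \sum_j x_j(a_j-a_i)$, the same use of Lemma \ref{lemma}(i) to factor the norms, and the triangle inequality (your rearranged form in part (ii) is just the paper's reverse triangle inequality step written equivalently). No gaps; the concluding remark about where Lemma \ref{Aram-Rajic} enters the equality analysis also matches the paper's subsequent development.
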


\begin{proof}
(i) For any fixed $i\,\,(1\leq i\leq n)$ we have
$$\sum_{j=1}^{n}x_ja_j=\sum_{j=1}^{n}x_ja_i+\sum_{j=1}^{n}x_j(a_j-a_i).$$
Also from Lemma \ref{lemma} we get $\|\sum_{j=1}^{n} x_j
a_i\|=\|\sum_{j=1}^{n} x_j\|\ \|a_i\|$ and
$\|x_j(a_j-a_i)\|=\|x_j\|\ \|a_j-a_i\|$. Hence
\begin{eqnarray*}
\left\|\sum_{j=1}^{n}
x_ja_j\right\|&=&\left\|\sum_{j=1}^{n}x_ja_i+\sum_{j=1}^{n}x_j(a_j-a_i)\right\|\leq\left\|\sum_{j=1}^{n}
x_ja_i\right\|+\sum_{j=1}^{n}\|x_j(a_j-a_i)\|\\
&=&\left\|\sum_{j=1}^{n} x_j\right\| \|a_i\|+\sum_{j=1}^{n}\|x_j\|\
\|a_j-a_i\|\,.
\end{eqnarray*}
Taking the minimum over $i=1,\cdots,n$ we deduce inequality
\eqref{L1}.\\
(ii) Fix $i\,\,(i=1,\cdots,n)$. From the triangle inequality and Lemma
\ref{lemma} we have
\begin{eqnarray*}
\left\|\sum_{j=1}^{n}
x_ja_j\right\|&=&\left\|\sum_{j=1}^{n}x_ja_i-\sum_{j=1}^{n}x_j(a_i-a_j)\right\|\geq\left\|\sum_{j=1}^{n}x_ja_i\right\|-\left\|\sum_{j=1}^{n}x_j(a_i-a_j)\right\|\\
&\geq&\left\|\sum_{j=1}^{n}x_ja_i\right\|-\sum_{j=1}^{n}\left\|x_j(a_i-a_j)\right\|\\
&=&\left\|\sum_{j=1}^{n}x_j\right\|\|a_i\|-\sum_{j=1}^{n}\|x_j\|\
\|a_i-a_j\|\,.
\end{eqnarray*}
From this we obtain inequality \eqref{L2} by taking the maximum over
$i=1,\cdots,n$.
\end{proof}

Following two results provide some equivalent conditions for the
equality case in Theorem \ref{T1}.

\begin{lemma}\label{T2}
Let $\mathscr{X}$ be a pre-Hilbert $\mathscr{A}$-module, $x_1,\cdots,x_n$ be non-zero
elements of $\mathscr{X}$ satisfying $\sum_{j=1}^{n} x_j\neq0$ and $a_1,\cdots,a_n$ be non-zero elements of $\mathscr{A}$
such that $a_i\neq a_j$ for some $i,j$
and the elements $a_j,a_j-a_i$ are scalar multiples of coisometries for all
$i,j$. Then for any $i\,\,(1\leq i\leq n)$ the following two
statements are equivalent:
\begin{eqnarray}\label{L3}
{\rm (i)}\ \left\|\sum_{j=1}^{n}
x_ja_j\right\|=\left\|\sum_{j=1}^{n} x_j\right\|
\|a_i\|+\sum_{j=1}^{n}\|x_j\|\ \|a_j-a_i\|\,.\ \ \ \ \ \ \ \ \ \ \ \ \
\ \ \ \ \ \ \ \ \ \ \ \ \ \ \ \ \ \
\end{eqnarray}
{\rm (ii)}\ There is a state $\varphi$ on $\mathscr{A}$ such that
$$\sum_{j=1}^{n} \varphi(a_i^*\langle x_j,x_k \rangle(a_k-a_i))=\left\|\sum_{j=1}^{n} x_j\right\|\|a_i\|\|x_k\|\|a_k-a_i\|$$
for all $k\,\,(k=1,\cdots,n)$ satisfying $a_k\neq a_i$.
\end{lemma}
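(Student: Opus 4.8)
The plan is to recognize equality (i) as an instance of the generalized triangle equality and then invoke Lemma \ref{Aram-Rajic}. First I would set $y_0=\left(\sum_{j=1}^{n}x_j\right)a_i$ and $y_j=x_j(a_j-a_i)$ for $1\le j\le n$, so that, exactly as in the proof of Theorem \ref{T1}, $\sum_{j=1}^{n}x_ja_j=y_0+\sum_{j=1}^{n}y_j$. By Lemma \ref{lemma}(i) we have $\|y_0\|=\|\sum_j x_j\|\,\|a_i\|$ and $\|y_j\|=\|x_j\|\,\|a_j-a_i\|$, so the right-hand side of (i) is precisely $\|y_0\|+\sum_{j=1}^{n}\|y_j\|$. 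Thus (i) is equivalent to the generalized triangle equality $\|y_0+\sum_j y_j\|=\|y_0\|+\sum_j\|y_j\|$.

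Before applying Lemma \ref{Aram-Rajic}, which requires nonzero summands, I would discard the vanishing terms. Since $x_j\neq0$ and $a_j-a_i$ is a scalar multiple of a coisometry, Lemma \ref{lemma}(ii) shows $y_j=0$ exactly when $a_j=a_i$; for those indices both $\|y_j\|$ and the corresponding term of (ii) vanish identically, so they play no role. Likewise $y_0\neq0$ because $\sum_j x_j\neq0$ and $a_i\neq0$. Writing $S=\{k:a_k\neq a_i\}$, which is nonempty since not all $a_j$ coincide, the nonzero summands are exactly $y_0$ together with $\{y_k:k\in S\}$, and the triangle equality above is unchanged when the zero terms are deleted.

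Now I would apply Lemma \ref{Aram-Rajic} to the nonzero family $\{y_0\}\cup\{y_k:k\in S\}$, taking $y_0$ as the distinguished reference element. This yields that the triangle equality, hence (i), holds if and only if there is a state $\varphi$ on $\mathscr{A}$ with $\varphi\langle y_k,y_0\rangle=\|y_k\|\,\|y_0\|$ for every $k\in S$. The final step is to rewrite this in terms of the original data. Expanding with the module axioms, $\langle y_0,y_k\rangle=a_i^*\langle\sum_j x_j,x_k\rangle(a_k-a_i)=\sum_{j=1}^{n}a_i^*\langle x_j,x_k\rangle(a_k-a_i)$, and $\|y_0\|\,\|y_k\|=\|\sum_j x_j\|\,\|a_i\|\,\|x_k\|\,\|a_k-a_i\|$, which is exactly the right-hand side of the identity in (ii).

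The one point needing care, and the main technical obstacle, is the direction of the inner product: Lemma \ref{Aram-Rajic} produces a condition on $\langle y_k,y_0\rangle$, whereas (ii) is phrased via $\langle y_0,y_k\rangle$. These are related by the adjoint, $\langle y_0,y_k\rangle=\langle y_k,y_0\rangle^*$, and since a state satisfies $\varphi(a^*)=\overline{\varphi(a)}$ while the common value $\|y_0\|\,\|y_k\|$ is real, the two conditions $\varphi\langle y_k,y_0\rangle=\|y_k\|\,\|y_0\|$ and $\varphi\langle y_0,y_k\rangle=\|y_0\|\,\|y_k\|$ are equivalent for the same state $\varphi$. This lets me pass between the lemma's output and statement (ii) in both directions, completing the equivalence.
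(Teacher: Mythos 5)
Your proof is correct and follows essentially the same route as the paper's: the same decomposition $\sum_j x_ja_j=\bigl(\sum_j x_j\bigr)a_i+\sum_j x_j(a_j-a_i)$, the same removal of the vanishing terms indexed by $\{k: a_k=a_i\}$ via Lemma \ref{lemma}, and the same application of Lemma \ref{Aram-Rajic} with $\bigl(\sum_j x_j\bigr)a_i$ as the distinguished element, followed by expanding the inner product with the module axioms. Your explicit handling of the orientation of the inner product (passing between $\varphi\langle y_k,y_0\rangle$ and $\varphi\langle y_0,y_k\rangle$ via $\langle y_0,y_k\rangle=\langle y_k,y_0\rangle^*$ and $\varphi(a^*)=\overline{\varphi(a)}$) is a point the paper passes over silently, and it is a welcome addition.
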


\begin{proof}
Let us fix $i\,\,(1\leq i\leq n)$. One can observe that \eqref{L3} is
equivalent to
\begin{eqnarray}\label{L4}
\left\|\sum_{j=1}^{n}x_ja_i+\sum_{j=1}^{n}x_j(a_j-a_i)\right\|=\left\|\sum_{j=1}^{n}
x_j\right\|\|a_i\|+\sum_{j=1}^{n}\|x_j\|\|a_j-a_i\|\,.
\end{eqnarray}
From the assumption of the theorem there exists a nonempty maximal subset
$\{j_1, \cdots, j_m \}$ of $\{ 1,\cdots,n\}$ for some $1\leq m\leq n$
such that $a_{j_k}\neq a_i$ for all $1\leq k\leq m$. Hence
\eqref{L4} holds if and only if
\begin{eqnarray}\label{L5}
\left\|\sum_{j=1}^{n}x_ja_i+\sum_{k=1}^{m}x_{j_k}(a_{j_k}-a_i)\right\|=\left\|\sum_{j=1}^{n}
x_j\right\|\|a_i\|+\sum_{k=1}^{m}\|x_{j_k}\|\ \|a_{j_k}-a_i\|\,.
\end{eqnarray}
From Lemma \ref{lemma} we have $\sum_{j=1}^{n}x_ja_i\neq0 ,
 x_{j_k}(a_{j_k}-a_i)\neq0 , \left\|\sum_{j=1}^{n}x_ja_i\right\|=\left\|\sum_{j=1}^{n}
x_j\right\|\|a_i\|$ and $\|x_{j_k}(a_{j_k}-a_i)\|=\|x_{j_k}\|
\|a_{j_k}-a_i\|, (1\leq k\leq m)$.\\ Applying Lemma
\ref{Aram-Rajic} on the elements $x_{j_k}(a_{j_k}-a_i)\,\,(1\leq k\leq m)$ and $\sum_{j=1}^{n}x_ja_i$ we conclude that \eqref{L5} holds
if and only if there is a state $\varphi$ on $\mathscr{A}$ such that
$$\varphi\langle \sum_{j=1}^{n}x_ja_i,x_{j_k}(a_{j_k}-a_i)
\rangle=\left\|\sum_{j=1}^{n} x_j\right\|\|a_i\|\ \|x_{j_k}\|\
\|a_{j_k}-a_i\|\,\,\,(1\leq k\leq m)\,.$$
Hence
$$\sum_{j=1}^{n} \varphi(a_i^*\langle x_j,x_{j_k}
\rangle(a_{j_k}-a_i))=\left\|\sum_{j=1}^{n} x_j\right\|\|a_i\|\
\|x_{j_k}\|\ \|a_{j_k}-a_i\|\,\,\, (1\leq k\leq m)\,.$$

\end{proof}

\begin{lemma}\label{T3}
Let $\mathscr{X}$ be a pre-Hilbert $\mathscr{A}$-module, $x_1,\cdots,x_n$ be non-zero
elements of $\mathscr{X}$ satisfying $\sum_{j=1}^{n} x_j=0$ and the elements $a_1,\cdots,a_n$ be non-zero elements of $\mathscr{A}$
such that $a_i\neq a_j$ for some $i,j$
and $a_j-a_i$ are scalar multiples of coisometries for all
$i,j$. Then for any $i\,\,(1\leq i\leq n)$ the following two
statements are equivalent:

\begin{eqnarray}\label{L6}
{\rm (i)}\ \left\|\sum_{j=1}^{n}
x_ja_j\right\|=\sum_{j=1}^{n}\|x_j\|\|a_j-a_i\|\,.\ \ \ \ \ \ \ \ \ \ \
\ \ \ \ \ \ \ \ \ \ \ \ \ \ \ \ \ \ \ \ \ \ \ \ \ \ \ \ \ \ \ \ \ \
\ \ \ \ \ \
\end{eqnarray}
{\rm (ii)}\ There exist $1\leq l\leq n$ such that $a_l\neq a_i$ and
a state $\varphi$ on $\mathscr{A}$ such that
$$\varphi((a_l^*-a_i^*)\langle x_l,x_k \rangle(a_k-a_i))=\|a_l-a_i\|\|a_k-a_i\|\|x_l\|\|x_k\|$$
for all $k\,\,(k=1,\cdots,n)$ satisfying $k\neq l$ and $a_k\neq a_i$.
\end{lemma}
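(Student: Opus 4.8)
The plan is to follow the scheme of the proof of Lemma \ref{T2}, the essential difference being that the hypothesis $\sum_{j=1}^{n} x_j = 0$ annihilates the leading term, so that Lemma \ref{Aram-Rajic} must be applied to the family $\{x_j(a_j-a_i)\}$ alone rather than to this family together with an extra summand. Concretely, I would first fix $i$ and use $\sum_{j=1}^{n} x_j = 0$ to rewrite
$$\sum_{j=1}^{n} x_j a_j = \Big(\sum_{j=1}^{n} x_j\Big) a_i + \sum_{j=1}^{n} x_j(a_j-a_i) = \sum_{j=1}^{n} x_j(a_j-a_i),$$
so that the summands with $a_j=a_i$ drop out. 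Since the $a_j$ are not all equal, the set $J := \{\, j : a_j \neq a_i \,\}$ is nonempty, and for $j\in J$ the difference $a_j-a_i$ is a nonzero scalar multiple of a coisometry. Putting $y_j := x_j(a_j-a_i)$ for $j\in J$, Lemma \ref{lemma} gives $y_j\neq 0$ and $\|y_j\| = \|x_j\|\,\|a_j-a_i\|$, while the right-hand side of \eqref{L6} collapses to $\sum_{j\in J}\|y_j\|$. Hence \eqref{L6} is exactly the generalized triangle equality $\big\|\sum_{j\in J} y_j\big\| = \sum_{j\in J}\|y_j\|$.

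Next I would invoke Lemma \ref{Aram-Rajic} for the nonzero family $\{y_j\}_{j\in J}$, taking the distinguished element to be the one indexed by some $l\in J$. This produces a state $\varphi$ with $\varphi\langle y_k,y_l\rangle = \|y_k\|\,\|y_l\|$ for every $k\in J$ with $k\neq l$. Expanding the inner product gives
$$\langle y_l,y_k\rangle = (a_l^*-a_i^*)\,\langle x_l,x_k\rangle\,(a_k-a_i),$$
and, because each value $\varphi\langle y_k,y_l\rangle$ is real, we have $\varphi\langle y_l,y_k\rangle = \overline{\varphi\langle y_k,y_l\rangle} = \|y_l\|\,\|y_k\|$; substituting the norm formula $\|y_j\|=\|x_j\|\,\|a_j-a_i\|$ turns this into precisely the identity of (ii). Read in the opposite direction the same chain converts the hypotheses of (ii) into the Aramba\v{s}i\'c--Raji\'c condition for $\{y_j\}_{j\in J}$ (with $y_l$ distinguished), so the converse half of Lemma \ref{Aram-Rajic} returns the triangle equality, that is, \eqref{L6}.

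The step that needs the most care is the bookkeeping around the distinguished index. In Lemma \ref{T2} the summand $\big(\sum_{j} x_j\big)a_i$ is nonzero and supplies a canonical distinguished element; here it vanishes, so the role of ``last element'' in Lemma \ref{Aram-Rajic} must be played by one of the $y_j$, which is exactly the source of the existential quantifier ``there exist $1\leq l\leq n$ with $a_l\neq a_i$'' in (ii). I would also dispose of the degenerate case $|J|=1$ separately: then \eqref{L6} holds trivially, the family of conditions in (ii) is empty, and any state on the unital $C^*$-algebra $\mathscr{A}$ witnesses (ii), so the equivalence still holds.
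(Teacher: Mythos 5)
Your proof is correct and takes essentially the same route as the paper's: use $\sum_{j=1}^{n} x_j=0$ to reduce \eqref{L6} to the generalized triangle equality for the subfamily $y_j=x_j(a_j-a_i)$ with $a_j\neq a_i$, then apply Lemma \ref{lemma} for the norm identities and nonvanishing, and Lemma \ref{Aram-Rajic} with one of the $y_l$ as distinguished element. Your explicit treatment of the slot-swap $\varphi\langle y_k,y_l\rangle \leftrightarrow \varphi\langle y_l,y_k\rangle$ via conjugate symmetry, and of the degenerate case where only one index has $a_j\neq a_i$, are details the paper leaves implicit but do not alter the argument.
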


\begin{proof}
Let $i\,\,(1\leq i\leq n)$ be fixed. It follows from $\sum_{j=1}^{n} x_j=0$ that
\begin{eqnarray}\label{L7}
\left\|\sum_{j=1}^{n} x_j(a_j-a_i)\right\|=\sum_{j=1}^{n}\|x_j\|\
\|a_j-a_i\|
\end{eqnarray}
is equivalent to \eqref{L6}. Also let $\{j_1,\cdots,j_m\}$ be as in
the proof of Lemma \ref{T2}. We infer that \eqref{L7} holds if and
only if
\begin{eqnarray}\label{L8}
\left\|\sum_{k=1}^{m}
x_{j_k}(a_{j_k}-a_i)\right\|=\sum_{k=1}^{m}\|x_{j_k}\|\|a_{j_k}-a_i\|\,.
\end{eqnarray}
Using Lemma \ref{lemma} we have $x_{j_k}(a_{j_k}-a_i)\neq0$ and
$\|x_{j_k}(a_{j_k}-a_i)\|=\|x_{j_k}\|\ \|a_{j_k}-a_i\|\,\,
(k=1,\cdots,m)$. Applying Lemma \ref{Aram-Rajic} on the elements
$x_{j_k}(a_{j_k}-a_i)\,\,(k=1,\cdots,m)$ we deduce that \eqref{L8}
holds if and only if  there exists a state $\varphi$ on
$\mathscr{A}$ such that
$$\varphi\langle x_{j_l}(a_{j_l}-a_i),x_{j_k}(a_{j_k}-a_i) \rangle=\|a_{j_l}-a_i\|\ \|a_{j_k}-a_i\|\|x_{j_l}\|\|x_{j_k}\|$$
for some $1\leq l\leq m$ and for all $k\in\{1,\cdots,m\}\setminus\{l\}$.
Hence
$$\varphi((a_{j_l}^*-a_i^*)\langle x_{j_l},x_{j_k} \rangle(a_{j_k}-a_i))=\|a_{j_l}-a_i\|\ \|a_{j_k}-a_i\| \|x_{j_l}\|  \|x_{j_k}\|$$
for some $1\leq l\leq m$ and for all $k\in\{1,\cdots,m\}\setminus\{l\}$.
\end{proof}

Now we are ready to state the following theorem as an immediate consequence of Lemmas \ref{T2} and \ref{T3}. It characterizes
the generalized Dunkl--Williams equality in pre-Hilbert $C^*$-modules.

\begin{theorem}\label{T4}
Let $\mathscr{X}$ be a pre-Hilbert $\mathscr{A}$-module, $x_1,\cdots,x_n$ be non-zero
elements of $\mathscr{X}$ and $a_1,\cdots,a_n$ be non-zero elements of $\mathscr{A}$
such that $a_i\neq a_j$ for some $i,j$
and the elements $a_j, a_j-a_i$ are scalar multiples of coisometries for all
$i,j$. \\
{\rm (i)} If $\sum_{j=1}^{n} x_j\neq0$, then
$$\left\|\sum_{j=1}^{n} x_ja_j\right\|=\min_{1\leq k\leq
n}\left\{\left\|\sum_{j=1}^{n} x_j\right\|
\|a_k\|+\sum_{j=1}^{n}\|x_j\|\ \|a_j-a_k\|\right\}$$
if and only if there are $1\leq i\leq n$ and a state $\varphi$ on $\mathscr{A}$ such that\\
$$\sum_{j=1}^{n} \varphi(a_i^*\langle x_j,x_k
\rangle(a_k-a_i))=\|\sum_{j=1}^{n} x_j\|\|a_i\|\ \|x_k\|\|a_k-a_i\|$$
for all $k=1,\cdots,n$ satisfying $a_k\neq a_i$.\\
{\rm (ii)} If $\sum_{j=1}^{n} x_j=0$, then\\
$\left\|\sum_{j=1}^{n} x_ja_j\right\|=\min_{1\leq k\leq
n}\left\{\sum_{j=1}^{n}\|x_j\|\ \|a_j-a_k\|\right\}$ if and only if there are $i,l\in\{1,\cdots,n\}$ satisfying $a_i\neq a_l$ and a state $\varphi$ on $\mathscr{A}$ such that
$$\varphi((a_l^*-a_i^*)\langle x_l,x_k
\rangle(a_k-a_i))=\|a_l-a_i\|\|a_k-a_i\|\|x_l\|\|x_k\|$$ for all
$k=1,\cdots,n$ satisfying $k\neq l$ and $a_k\neq a_i$.
\end{theorem}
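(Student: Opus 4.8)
The plan is to deduce Theorem \ref{T4} directly from Theorem \ref{T1}(i) together with Lemmas \ref{T2} and \ref{T3}, since the statement is essentially a repackaging of these results. The conceptual heart of the argument is the elementary observation that a quantity equals the minimum of finitely many of its upper bounds precisely when at least one of those bounds is attained.

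First I would treat case (i), where $\sum_{j=1}^n x_j \neq 0$. By Theorem \ref{T1}(i), for every $k = 1, \ldots, n$ one has
$$\left\|\sum_{j=1}^{n} x_ja_j\right\| \leq \left\|\sum_{j=1}^{n} x_j\right\| \|a_k\| + \sum_{j=1}^{n}\|x_j\|\,\|a_j-a_k\|,$$
so the left-hand side is a lower bound for each term inside the minimum and hence does not exceed the minimum itself. Consequently the asserted equality holds if and only if at least one term attains this common value, i.e. if and only if there is an index $i$ for which \eqref{L3} holds. Since all the hypotheses of Lemma \ref{T2} are in force (the $x_j$ are non-zero, $\sum_j x_j \neq 0$, the $a_j$ are non-zero with $a_j, a_j - a_i$ scalar multiples of coisometries, and $a_i \neq a_j$ for some $i,j$), I may invoke that lemma to replace \eqref{L3}, for this fixed $i$, by the existence of a state $\varphi$ satisfying the displayed identity over all $k$ with $a_k \neq a_i$. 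Quantifying existentially over $i$ then yields exactly statement (i).

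Case (ii), where $\sum_{j=1}^n x_j = 0$, is handled identically: now $\|\sum_j x_j\| = 0$, so the $k$-th term in Theorem \ref{T1}(i) collapses to $\sum_{j=1}^n \|x_j\|\,\|a_j - a_k\|$, and the same lower-bound argument shows the equality in question is equivalent to \eqref{L6} holding for some index $i$. Applying Lemma \ref{T3} converts this, for that $i$, into the existence of an index $l$ with $a_l \neq a_i$ and a state $\varphi$ satisfying the stated identity, which is precisely statement (ii).

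I do not anticipate a genuine obstacle here, since no new estimate is needed; the only point requiring care is the logical manipulation of the minimum. One must argue both directions correctly --- that equality forces some term to equal the left-hand side (using that the minimum is attained) and that a single attained term forces equality with the minimum (using that every term dominates the left-hand side) --- and must ensure that the existential quantifier on the index $i$ in the final statement is matched to the ``there exists a minimizing index'' clause, rather than accidentally asserting the identity only for a prescribed index. Verifying that the hypotheses of Lemmas \ref{T2} and \ref{T3} are inherited from those of Theorem \ref{T4} is routine.
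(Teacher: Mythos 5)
Your proposal is correct and follows essentially the same route as the paper, which states Theorem~\ref{T4} as an ``immediate consequence'' of Lemmas~\ref{T2} and~\ref{T3} without writing out the details. The minimum-attainment argument you supply via Theorem~\ref{T1} (equality with the minimum holds iff some individual bound is attained, since the left-hand side is dominated by every term) is precisely the routine step the paper leaves implicit, and your verification that the hypotheses of the two lemmas are inherited is sound.
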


\begin{remark}
The condition that all elements $a_j$ and $a_j-a_i\,\,(i\neq j)$ are scalar multiples of coisometries is not restrictive. In fact, there are non-trivial concrete examples of elements $a_1, \ldots,a_n$ of some $C^*$-algebras satisfying this condition. A non-trivial example of a set of two elements is given in $M_2(\mathbb{C})$ by
$$a_1=\left[\begin{array}{cc}\alpha&0\\0&\beta\end{array}\right], \quad a_2=\left[\begin{array}{cc}\beta&0\\0&\alpha\end{array}\right]\,,$$
where $\alpha$ and $\beta$ are any complex numbers such that $|\alpha|=|\beta|$ and $\alpha^2\neq\beta^2$.\\
Now assume that $\mathscr{A}$ is a unital $C^*$-algebra, with the unit $e$, which has a halving projection $p$, i.e. a projection $p$ satisfying $p\sim e$ and $e-p\sim e$. Recall that two projections $p$ and $q$ are called (Murray-von Neumann) equivalent, denoted $p\sim q$, if there exists a partial isometry $v \in \mathscr{A}$ such that $p=v^*v$ and $q=vv^*$. A known example of a halving projection is $p(x_1, x_2, x_3, x_4, \ldots)=(0, x_2, 0, x_4, 0, \ldots)$ in $\mathbb{B}(\ell_2)$. Halving projections are useful due to allow one to consider some matrix structures inside the underlying $C^*$-algebra, see e.g. \cite[Chapter 5]{WEG} and \cite[Theorem 4.2]{MOS}.\\
Now assume that $\mathscr{A}$ has a halving projection, then it has $n$ mutually orthogonal halving projections $p_1, p_2, \ldots, p_n$ (see \cite[Lemma 5.3.5]{WEG}). Hence there are partial isometries $v_j\,\,(1\leq j\leq n)$ such that $p_j=v_j^*v_j$ and $e=v_jv_j^*$. It follows from $p_jp_k=0$ that $v_j^*v_jv_k^*v_k=0$, whence $v_jv_k^*=0$ for all $1\leq j \neq k \leq n$. Hence $(v_j-v_k)(v_j-v_k)^*=v_jv_j^*-v_jv_k^*-v_kv_j^*+v_kv_k^*=2e$. Thus $v_1, v_2, \ldots, v_n$ can be considered as the required elements.\\
It follows from \cite[Proposition 3.2.4]{SAK} that if $\mathscr{A}$ is a properly infinite $W^*$-algebra, then there exists a sequence $\{p_n\}$ of mutually orthogonal projections in $\mathscr{A}$ with $p_n \sim e$. This therefore provides an infinite sequence of requested elements as above.
\end{remark}

The next result characterizes the equality case in an inequality due
to Dragomir \cite{Drag} in Pre-Hilbert $C^*$-modules.

\begin{corollary}\label{C1}
Let $\mathscr{X}$ be a pre-Hilbert $\mathscr{A}$-module,
$x_1,\cdots,x_n$ be non-zero elements of $\mathscr{X}$ and
$\alpha_1,\cdots,\alpha_n$ be non-zero scalars
satisfying $\alpha_i\neq\alpha_j$ for some i,j.\\
{\rm (i)} If $\sum_{j=1}^{n} x_j\neq0$, then
$$\left\|\sum_{j=1}^{n}\alpha_jx_j\right\|=\min_{1\leq
k\leq n}\left\{|\alpha_k| \left\|\sum_{j=1}^{n}
x_j\right\|+\sum_{j=1}^{n}|\alpha_j-\alpha_k| \|x_j\|\right\}$$
if and only if  there are $1\leq i\leq n$ and a state $\varphi$ on $\mathscr{A}$ such that
$${\rm{cis}}\left(\arg\bar{\alpha_i}+\arg(\alpha_k-\alpha_i)\right)\sum_{j=1}^{n}\varphi\langle
x_j,x_k \rangle=\|\sum_{j=1}^{n} x_j\| \|x_k\|$$ for all $k=1,\cdots,n$
satisfying $\alpha_k\neq \alpha_i$. \\
{\rm (ii)} If $\sum_{j=1}^{n} x_j=0$, then\\
$\left\|\sum_{j=1}^{n}\alpha_jx_j\right\|=\min_{1\leq k\leq
n}\left\{\sum_{j=1}^{n}|\alpha_j-\alpha_k| \|x_j\|\right\}$ if and only if\\
there are $i,l\in\{1,\cdots,n\}$ satisfying
$\alpha_i\neq\alpha_l$ and a state $\varphi$ on $\mathscr{A}$ such that
$${\rm{cis}}\left(\arg(\bar{\alpha_l}-\bar{\alpha_i})+\arg(\alpha_k-\alpha_i)\right)\varphi\langle
x_l,x_k \rangle=\|x_l\| \|x_k\|$$ for all $k=1,\cdots,n$
satisfying $k\neq l$ and $\alpha_k\neq \alpha_i$.
\end{corollary}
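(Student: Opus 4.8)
The plan is to obtain Corollary \ref{C1} as a direct specialization of Theorem \ref{T4}, taking $a_j=\alpha_j e$ for $j=1,\dots,n$, where $e$ is the unit of $\mathscr{A}$. The first step is to verify that this choice satisfies the hypotheses of Theorem \ref{T4}. Since $ee^*=e$, the unit is a coisometry, so each $a_j=\alpha_j e$ and each difference $a_j-a_i=(\alpha_j-\alpha_i)e$ is a scalar multiple of a coisometry; moreover $a_j\neq0$ because $\alpha_j\neq0$, and the requirement ``$a_i\neq a_j$ for some $i,j$'' coincides with ``$\alpha_i\neq\alpha_j$ for some $i,j$''. Thus both parts of Theorem \ref{T4} apply verbatim to the $a_j$.

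Next I would record the elementary dictionary between the two statements. From $\|\alpha e\|=|\alpha|\,\|e\|=|\alpha|$ we get $\|a_k\|=|\alpha_k|$ and $\|a_j-a_i\|=|\alpha_j-\alpha_i|$, so the quantity minimized in Theorem \ref{T4} becomes
$$|\alpha_k|\left\|\sum_{j=1}^{n} x_j\right\|+\sum_{j=1}^{n}|\alpha_j-\alpha_k|\,\|x_j\|.$$
Because the module action of $\alpha e$ agrees with scalar multiplication by $\alpha$ (from $\langle x,y(\alpha e)\rangle=\langle x,y\rangle(\alpha e)=\alpha\langle x,y\rangle=\langle x,\alpha y\rangle$ for all $x$, together with positive-definiteness of the inner product), we have $x_j a_j=\alpha_j x_j$ and hence $\sum_{j}x_j a_j=\sum_{j}\alpha_j x_j$. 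Therefore the norm equalities in Theorem \ref{T4}(i) and (ii) are exactly those appearing in Corollary \ref{C1}(i) and (ii).

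The remaining work is to convert the state conditions, which is a matter of polar bookkeeping. In case (i) I would substitute $a_i^*=\bar{\alpha_i}e$ and $a_k-a_i=(\alpha_k-\alpha_i)e$, so that $a_i^*\langle x_j,x_k\rangle(a_k-a_i)=\bar{\alpha_i}(\alpha_k-\alpha_i)\langle x_j,x_k\rangle$; pulling the scalar out of the linear functional $\varphi$ and summing over $j$ turns the condition of Theorem \ref{T4}(i) into
$$\bar{\alpha_i}(\alpha_k-\alpha_i)\sum_{j=1}^{n}\varphi\langle x_j,x_k\rangle=|\alpha_i|\,|\alpha_k-\alpha_i|\left\|\sum_{j=1}^{n} x_j\right\|\|x_k\|.$$
Writing $\bar{\alpha_i}(\alpha_k-\alpha_i)=|\alpha_i|\,|\alpha_k-\alpha_i|\,{\rm cis}\!\left(\arg\bar{\alpha_i}+\arg(\alpha_k-\alpha_i)\right)$ and dividing by $|\alpha_i|\,|\alpha_k-\alpha_i|$ yields the stated cis-condition. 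Case (ii) is identical, with $a_l^*-a_i^*=(\bar{\alpha_l}-\bar{\alpha_i})e$ in place of $a_i^*$ and the single term $\langle x_l,x_k\rangle$ in place of the sum, so that $(a_l^*-a_i^*)\langle x_l,x_k\rangle(a_k-a_i)=(\bar{\alpha_l}-\bar{\alpha_i})(\alpha_k-\alpha_i)\langle x_l,x_k\rangle$, and the same polar factorization produces the second formula.

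The only point requiring care, and the closest thing to an obstacle, is purely clerical: one must check that the moduli $|\alpha_i|$ and $|\alpha_k-\alpha_i|$ (respectively $|\alpha_l-\alpha_i|$ and $|\alpha_k-\alpha_i|$) by which we divide are genuinely nonzero. This is guaranteed by the hypothesis $\alpha_j\neq0$ together with the restriction of $k$ to the indices with $\alpha_k\neq\alpha_i$ (respectively $k\neq l$ and $\alpha_k\neq\alpha_i$), which are precisely the indices over which the conditions in Theorem \ref{T4} range. No analytic input beyond Theorem \ref{T4} is needed, so the corollary follows.
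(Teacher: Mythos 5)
Your proof is correct and takes essentially the same route as the paper: the paper also specializes Theorem \ref{T4} by setting $a_j=\alpha_j e$ and then invokes the polar identities $\frac{\bar{\alpha_i}(\alpha_k-\alpha_i)}{|\alpha_i|\,|\alpha_k-\alpha_i|}={\rm cis}\left(\arg\bar{\alpha_i}+\arg(\alpha_k-\alpha_i)\right)$ and its analogue for $(\bar{\alpha_l}-\bar{\alpha_i})(\alpha_k-\alpha_i)$. Your additional checks (that $e$ is a coisometry, that $x_j(\alpha_j e)=\alpha_j x_j$, and that the moduli divided by are nonzero) are details the paper leaves implicit, so the two arguments coincide in substance.
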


\begin{proof}
Apply Theorem \ref{T4} by setting $a_j=\alpha_je\,\,
(j=1,\cdots,n)$. Then the results follows from Theorem \ref{T4} and
the following two observations:
$$\frac{\bar{\alpha_i}(\alpha_k-\alpha_i)}{|\alpha_i|
|\alpha_k-\alpha_i|}={\rm{cis}}\left(\arg\bar{\alpha_i}+\arg(\alpha_k-\alpha_i)\right)$$
and
$$\frac{(\bar{\alpha_l}-\bar{\alpha_i})(\alpha_k-\alpha_i)}{|\alpha_l-\alpha_i|
|\alpha_k-\alpha_i|}={\rm{cis}}\left(\arg(\bar{\alpha_l}-\bar{\alpha_i})+\arg(\alpha_k-\alpha_i)\right).$$
\end{proof}

Some special case of Corollary \ref{C1} gives rise to the known results of Pe\v{c}ari\'c and Raji\'c \cite[Corollaries 3.3 and 3.4]{Pec-Raj2}.

\begin{corollary}\label{C2}
Let $\mathscr{X}$ be a pre-Hilbert $\mathscr{A}$-module, $x_1,\cdots,x_n$ be non-zero
elements of $\mathscr{X}$ such that $\|x_i\|\neq\|x_j\|$ for some $i,j.$ \\
{\rm (i)} If $\sum_{j=1}^{n} x_j\neq0$, then
$$\left\|\sum_{j=1}^{n}\frac{x_j}{\|x_j\|}\right\|=\min_{1\leq
k\leq
n}\left\{\frac{1}{\|x_k\|}\left(\left\|\sum_{j=1}^{n}x_j\right\|+\sum_{j=1}^{n}|\|x_j\|-\|x_k\||\right)\right\}$$
if and only if  there are $1\leq i\leq n$ and a state $\varphi$ on
$\mathscr{A}$ such that
$${\rm sgn}(\|x_i\|-\|x_k\|)\sum_{j=1}^{n}\varphi\langle
x_j,x_k \rangle=\left\|\sum_{j=1}^{n} x_j\right\| \|x_k\|$$ for all
$k=1,\cdots,n$
satisfying $\|x_k\|\neq\|x_i\|$.\\
{\rm (ii)} If $\sum_{j=1}^{n} x_j=0$, then\\
$\left\|\sum_{j=1}^{n}\frac{x_j}{\|x_j\|}\right\|=\min_{1\leq k\leq
n}\left\{\frac{1}{\|x_k\|}\sum_{j=1}^{n}|\|x_j\|-\|x_k\||\right\}$ if and only if\\
there are $i,l\in\{1,\cdots,n\}$ satisfying
$\|x_i\|\neq\|x_l\|$ and a state $\varphi$ on $\mathscr{A}$ such that
$${\rm sgn}(\|x_i\|-\|x_l\|){\rm
sgn}(\|x_i\|-\|x_k\|)\varphi\langle x_l,x_k \rangle=\|x_l\|
\|x_k\|$$ for all $k=1,\cdots,n$
satisfying $k\neq l$ and $\|x_k\|\neq\|x_i\|$.
\end{corollary}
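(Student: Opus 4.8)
The plan is to obtain this corollary as the special case of Corollary \ref{C1} in which one sets $\alpha_j = \tfrac{1}{\|x_j\|}$ for each $j$. Since the $x_j$ are non-zero, each $\alpha_j$ is a well-defined strictly positive real scalar, and the standing hypothesis $\alpha_i \neq \alpha_j$ for some $i,j$ becomes exactly the assumption $\|x_i\| \neq \|x_j\|$ for some $i,j$. Thus all hypotheses of Corollary \ref{C1} are satisfied, and the whole task reduces to rewriting its two conclusions under this substitution.

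First I would simplify the quantities appearing in the minimum. We have $|\alpha_k| = \tfrac{1}{\|x_k\|}$ and
$$|\alpha_j - \alpha_k| = \left|\frac{1}{\|x_j\|} - \frac{1}{\|x_k\|}\right| = \frac{|\,\|x_j\| - \|x_k\|\,|}{\|x_j\|\,\|x_k\|},$$
so that $|\alpha_j - \alpha_k|\,\|x_j\| = \tfrac{1}{\|x_k\|}\,|\,\|x_j\| - \|x_k\|\,|$. Factoring $\tfrac{1}{\|x_k\|}$ out of the bracketed expression of Corollary \ref{C1}(i) turns it into $\tfrac{1}{\|x_k\|}\big(\|\sum_{j=1}^{n} x_j\| + \sum_{j=1}^{n} |\,\|x_j\| - \|x_k\|\,|\big)$, and the same computation collapses the bracket in part (ii) to $\tfrac{1}{\|x_k\|}\sum_{j=1}^{n} |\,\|x_j\| - \|x_k\|\,|$. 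These are precisely the minimands in the statement.

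The key remaining step is to evaluate the ${\rm cis}$ factors. Because each $\alpha_j$ is a positive real, $\arg\bar{\alpha_i} = 0$ and $\bar{\alpha_l} - \bar{\alpha_i} = \alpha_l - \alpha_i$ is real. Moreover $\alpha_k - \alpha_i = \tfrac{\|x_i\| - \|x_k\|}{\|x_i\|\,\|x_k\|}$ is a non-zero real whenever $\alpha_k \neq \alpha_i$, and its argument equals $0$ or $\pi$ according as $\|x_i\| - \|x_k\|$ is positive or negative; hence ${\rm cis}(\arg(\alpha_k - \alpha_i)) = {\rm sgn}(\|x_i\| - \|x_k\|)$, and likewise ${\rm cis}(\arg(\alpha_l - \alpha_i)) = {\rm sgn}(\|x_i\| - \|x_l\|)$. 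Using the multiplicativity ${\rm cis}(s+t) = {\rm cis}(s)\,{\rm cis}(t)$, the factor in Corollary \ref{C1}(i) becomes ${\rm sgn}(\|x_i\| - \|x_k\|)$ and the factor in part (ii) becomes ${\rm sgn}(\|x_i\| - \|x_l\|)\,{\rm sgn}(\|x_i\| - \|x_k\|)$. Substituting these back yields exactly the equality conditions stated in parts (i) and (ii).

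Since everything reduces to a mechanical specialization, there is no serious obstacle; the only point demanding care is confirming that the passage between $\alpha_k \neq \alpha_i$ and $\|x_k\| \neq \|x_i\|$ (and similarly for the index $l$) is an exact equivalence, so that the index sets over which the equality conditions are quantified coincide on both sides. The sign bookkeeping in the ${\rm cis}$ simplification is the most error-prone piece, but it is fully determined by the single observation that the argument of a negative real number is $\pi$.
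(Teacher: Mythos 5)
Your proposal is correct and follows exactly the paper's own route: specializing Corollary \ref{C1} with $\alpha_j=\frac{1}{\|x_j\|}$ and reducing the ${\rm cis}$ factors to ${\rm sgn}(\|x_i\|-\|x_k\|)$ and ${\rm sgn}(\|x_i\|-\|x_l\|)\,{\rm sgn}(\|x_i\|-\|x_k\|)$, which are precisely the two observations the paper records. Your write-up is in fact slightly more detailed than the paper's, since you also verify the simplification of the minimands and the equivalence of the index conditions $\alpha_k\neq\alpha_i$ and $\|x_k\|\neq\|x_i\|$, which the paper leaves implicit.
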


\begin{proof}
Apply Corollary \ref{C1} by putting $\alpha_j=\frac{1}{\|x_j\|}\,\,
(j=1,\cdots,n)$. Hence the result follows from Corollary \ref{C1}
and the following two observations:
$${\rm{cis}}\left(\arg\frac{1}{\|x_i\|}+\arg(\frac{1}{\|x_k\|}-\frac{1}{\|x_i\|})\right)={\rm
sgn}(\|x_i\|-\|x_k\|)$$
and
$${\rm{cis}}\left(\arg(\frac{1}{\|x_l\|}-\frac{1}{\|x_i\|})+\arg(\frac{1}{\|x_k\|}-\frac{1}{\|x_i\|})\right)={\rm
sgn}(\|x_i\|-\|x_l\|){\rm sgn}(\|x_i\|-\|x_k\|)\,.$$

\end{proof}

\bibliographystyle{amsplain}

\begin{thebibliography}{99}

\bibitem{Aram-Raj} Lj. Aramba\v{s}i\'c and R. Raji\'c, \textit{On some norm equalities in pre-Hilbert $C^*$-modules},
Linear Algebra Appl. \textbf{414} (1) (2006), 19--28.


\bibitem{Bar-Boum} M. Barraa and M. Boumazgour, \textit{Inner derivations and norm equality},
Proc. Amer. Math. Soc. \textbf{130} (2) (2002), 471--476.

\bibitem{D-F-M} F. Dadipour, M. Fujii  and M.S. Moslehian, \textit{An operator version of the Dunkl-Williams inequality with respect to the $p$-angular distance}, Nihonkai Math. J. \textbf{21} (2010), no. 1, 11--20.

\bibitem{Drag} S.S. Dragomir, \textit{Generalization of the Pe\v{c}ari\'c-Raji\'c inequality
in normed linear spaces}, Math. Inequal. Appl. \textbf{12} (2009),
no. 1, 53--65.

\bibitem{D-W} C.F. Dunkl and K.S. Williams, \textit{Mathematical Notes: A Simple Norm Inequality},
Amer. Math. Monthly \textbf{71} (1964), no. 1, 53--54.

\bibitem{Kato} M. Kato, K.-S. Saito and T. Tamura, \textit{Sharp triangle inequality and its reverse in Banach spaces},
Math. Inequal. Appl. \textbf{10} (2007), no. 2, 451--460.

\bibitem{Lan} E.C. Lance: \textit{Hilbert $C^*$-Modules}, London
Math. Soc. Lecture Note Series 210, Cambridge University Press,
Cambridge, 1995.

\bibitem{Malig} L. Maligranda, \textit{Simple norm inequalities}, Amer. Math. Monthly \textbf{113} (2006), no. 3, 256--260.

\bibitem{MAL} L. Maligranda, \textit{Some remarks on the triangle inequality for norms}, Banach J. Math. Anal. \textbf{2} (2008), no. 2, 31--41.

\bibitem{MOS} M.S. Moslehian, \textit{On $2\times 2$ matrices over $C^*$-algebras}, Acta Math. Acad. Paedagog. Nyh\' azi. (N.S.) \textbf{19} (2003), no. 1, 51--53.

\bibitem{Merc} P.P. Mercer, \textit{The Dunkl--Williams inequality in an inner product space}, Math. Inequal. Appl. \textbf{10} (2007), no. 2,
447--450.

\bibitem{Mur} J.G. Murphy, \textit{$C^*$-Algebras and Operator Theory }, Academic Press, San Diego, 1990.

\bibitem{Pec-Raj2} J.E. Pe\v{c}ari\'c and R. Raji\'c, \textit{The Dunkl--Williams equality in
pre-Hilbert $C^*$-modules}, Linear Algebra Appl. \textbf{425}
(2007), no. 1, 16--25.

\bibitem{Pec-Raj1} J.E. Pe\v{c}ari\'c and R. Raji\'c, \textit{The
Dunkl--Williams inequality with $n$-elements in normed linear
spaces}, Math. Inequal. Appl. \textbf{10} (2007), no. 2, 461--470.

\bibitem{SAK} S. Sakai, \textit{$C^*$-algebras and $W^*$-algebras}, Springer-Verlag, Berlin, 1998.

\bibitem{WEG} N.E. Wegge-Olsen, \textit{$K$-Theory and $C^*$-Algebras}, Oxford Science Publications. The Clarendon Press, Oxford University Press, New York, 1993.



\end{thebibliography}

\end{document}